\title[Upward closure and amalgamation in the generic multiverse]{Upward closure and amalgamation in the generic multiverse of a countable model of set theory}
\author[Joel David Hamkins]{Joel David Hamkins}
 \address[J. D. Hamkins]
        {Mathematics, Philosophy, Computer Science,
          The Graduate Center of The City University of New York,
          365 Fifth Avenue, New York, NY 10016
          \&
          Mathematics,
          College of Staten Island of CUNY,
          Staten Island, NY 10314}
\email{jhamkins@gc.cuny.edu}
\urladdr{http://jdh.hamkins.org}
\thanks{This article is based upon I talk I gave at the conference on Recent Developments in Axiomatic Set Theory at the Research Institute for Mathematical Sciences (RIMS) at Kyoto University, Japan in September, 2015, and I am extremely grateful to my Japanese hosts, especially Toshimichi Usuba, for supporting my research visit there and also at the CTFM conference at Tokyo Institute of Technology just preceding it. My research has also been supported in part by the Simons Foundation (grant 209252), and part of this article was written while I was Visiting Fellow at the Isaac Newton Institute in Cambridge, UK for the program on the Mathematical, Computational and Foundational Aspects of the Higher Infinite. This article includes material adapted from section \S2 of~\cite{FuchsHamkinsReitz2015:Set-theoreticGeology}, jointly with G.~Fuchs, myself and J.~Reitz. Theorem~\ref{Theorem.ChainsOfCohenExtensionsHaveUpperBound} was proved in a series of conversations I had with Giorgio Venturi at the Young Set Theory Workshop 2011 in Bonn and continuing at the London 2011 summer school on set theory at Birkbeck University London. Related exposition and commentary can be found on my blog at~\cite{Hamkins2015:BlogPostUpwardClosureInTheToyMultiverseOfAllCountableModelsOfSetTheory, Hamkins2015:BlogPostUpwardClosureInTheGenericMultiverseOfForcingToAddACohenReal}. Commentary concerning this paper can be made at \url{http://jdh.hamkins.org/upward-closure-and-amalgamation-in-the-generic-multiverse}.}
\newtheorem{theorem}{Theorem}
\newtheorem{question}[theorem]{Question}
\newtheorem*{questions*}{Questions}
\newtheorem*{mainquestion*}{Main Question} 
\newtheorem*{openquestion*}{Open Question} 
\newcommand{\QED}{\end{proof}}
\def\proclaim[#1]{{\bf #1}}
\def\BF#1.{{\bf #1.}}
\newcommand{\Levy}{L\'{e}vy}
\newcommand{\B}{{\mathbb B}}
\newcommand{\C}{{\mathbb C}}
\renewcommand{\P}{{\mathbb P}}
\newcommand{\Q}{{\mathbb Q}}
\newcommand{\R}{{\mathbb R}}
\newcommand{\of}{\subseteq}
\newcommand{\Add}{\mathop{\rm Add}}
\newcommand{\restrict}{\upharpoonright} 
\newcommand{\satisfies}{\models}
\newcommand{\Union}{\bigcup}
\newcommand{\smalllt}{\mathrel{\mathchoice{\raise2pt\hbox{$\scriptstyle<$}}{\raise1pt\hbox{$\scriptstyle<$}}{\raise0pt\hbox{$\scriptscriptstyle<$}}{\scriptscriptstyle<}}}
\newcommand{\smallleq}{\mathrel{\mathchoice{\raise2pt\hbox{$\scriptstyle\leq$}}{\raise1pt\hbox{$\scriptstyle\leq$}}{\raise1pt\hbox{$\scriptscriptstyle\leq$}}{\scriptscriptstyle\leq}}}
\newcommand{\ltomega}{{{\smalllt}\omega}}
\newcommand{\boolval}[1]{\mathopen{\lbrack\!\lbrack}\,#1\,\mathclose{\rbrack\!\rbrack}}
\def\[#1]{\boolval{#1}}
\newbox\gnBoxA
\newdimen\gnCornerHgt
\newdimen\gnArgHgt
\def\gcode #1{%
\setbox\gnBoxA=\hbox{$#1$}%
\gnArgHgt=\ht\gnBoxA%
\ifnum     \gnArgHgt<\gnCornerHgt \gnArgHgt=0pt%
\else \advance \gnArgHgt by -\gnCornerHgt%
\fi \raise\gnArgHgt\hbox{\tiny$\ulcorner$} \box\gnBoxA %
\raise\gnArgHgt\hbox{\tiny$\urcorner$}}
\newcommand{\UnderTilde}[1]{{\setbox1=\hbox{$#1$}\baselineskip=0pt\vtop{\hbox{$#1$}\hbox to\wd1{\hfil$\sim$\hfil}}}{}}
\newcommand{\Undertilde}[1]{{\setbox1=\hbox{$#1$}\baselineskip=0pt\vtop{\hbox{$#1$}\hbox to\wd1{\hfil$\scriptstyle\sim$\hfil}}}{}}
\newcommand{\undertilde}[1]{{\setbox1=\hbox{$#1$}\baselineskip=0pt\vtop{\hbox{$#1$}\hbox to\wd1{\hfil$\scriptscriptstyle\sim$\hfil}}}{}}
\newcommand{\UnderdTilde}[1]{{\setbox1=\hbox{$#1$}\baselineskip=0pt\vtop{\hbox{$#1$}\hbox to\wd1{\hfil$\approx$\hfil}}}{}}
\newcommand{\Underdtilde}[1]{{\setbox1=\hbox{$#1$}\baselineskip=0pt\vtop{\hbox{$#1$}\hbox to\wd1{\hfil\scriptsize$\approx$\hfil}}}{}}
\def\<#1>{\left\langle#1\right\rangle}
\newcommand{\Ord}{\mathord{{\rm Ord}}}
\newcommand{\ZFC}{{\rm ZFC}}
\newcommand{\DDG}{{\rm DDG}}
\newcommand{\cell}[1]{\boxit{\hbox to 17pt{\strut\hfil$#1$\hfil}}}
\newcommand{\head}[2]{\lower2pt\vbox{\hbox{\strut\footnotesize\it\hskip3pt#2}\boxit{\cell#1}}}
\newcommand{\boxit}[1]{\setbox4=\hbox{\kern2pt#1\kern2pt}\hbox{\vrule\vbox{\hrule\kern2pt\box4\kern2pt\hrule}\vrule}}
\newcommand{\Col}[3]{\hbox{\vbox{\baselineskip=0pt\parskip=0pt\cell#1\cell#2\cell#3}}}
\newcommand{\tapenames}{\raise 5pt\vbox to .7in{\hbox to .8in{\it\hfill input: \strut}\vfill\hbox to
.8in{\it\hfill scratch: \strut}\vfill\hbox to .8in{\it\hfill output: \strut}}}
\newcommand{\Head}[4]{\lower2pt\vbox{\hbox to25pt{\strut\footnotesize\it\hfill#4\hfill}\boxit{\Col#1#2#3}}}
\newcommand{\Dots}{\raise 5pt\vbox to .7in{\hbox{\ $\cdots$\strut}\vfill\hbox{\ $\cdots$\strut}\vfill\hbox{\
$\cdots$\strut}}}
\newcommand{\df}{\it} 
\begin{document}

\begin{abstract}
 I prove several theorems concerning upward closure and amalgamation in the generic multiverse of a countable transitive model of set theory. Every such model $W$ has forcing extensions $W[c]$ and $W[d]$ by adding a Cohen real, which cannot be amalgamated in any further extension, but some nontrivial forcing notions have all their extensions amalgamable. An increasing chain $W[G_0]\of W[G_1]\of\cdots$ has an upper bound $W[H]$ if and only if the forcing had uniformly bounded essential size in $W$. Every chain $W\of W[c_0]\of W[c_1]\of \cdots$ of extensions adding Cohen reals is bounded above by $W[d]$ for some $W$-generic Cohen real $d$.
\end{abstract}
\maketitle

\noindent
Consider a countable transitive model of set theory $W\satisfies\ZFC$ in the context of all its forcing extensions. Several natural questions immediately suggest themselves concerning issues of amalgamation and upward-closure. For example, can any two such models be amalgamated into a common larger model? In other words, is this collection of models upward directed? When can we expect to find upper bounds for increasing chains? In this article, I shall resolve these and other similar questions. In particular, theorem~\ref{Theorem.Nonamalgamation} shows that there are forcing extensions $W[c]$ and $W[d]$, each adding a Cohen real, which have no common further extension; theorem~\ref{Theorem.WideForcingNonamalgamation} generalizes this non-amalgamation phenomenon to a wide class of other forcing notions, but theorem~\ref{Theorem.AutomaticMutualGenericity} shows that some forcing notions do always admit amalgamation. For upward closure, theorem~\ref{Theorem.ChainsOfCohenExtensionsHaveUpperBound} shows that every chain $$W[c_0]\of W[c_1]\of W[c_2]\of \cdots$$ of Cohen-real extensions of $W$ has an upper bound $W[d]$ in another Cohen-real extension, and theorem~\ref{Theorem.ChainUpperBoundIff} shows generally that any chain of forcing extensions has an upper bound if and only if the forcing was uniformly bounded in essential size.

In order to make a self-contained presentation, this article includes several results adapted from my previous joint work with Gunter Fuchs and Jonas Reitz~\cite[\S2]{FuchsHamkinsReitz2015:Set-theoreticGeology}, as well as some joint work with Giorgio Venturi.

\section{The generic multiverse}

Before presenting the results, let me briefly place the work into a somewhat broader context, which furthermore has connections with issues in the philosophy of set theory. Namely, the forcing extensions of a fixed model of set theory $W$ form an upward oriented cone in what is called the {\df generic multiverse} of $W$, which is the collection of all models that one can reach from $W$ by iteratively moving either to a forcing extension or a ground model, in each case by set forcing in the relevant model. Thus, every model $M$ in the generic multiverse of $W$ is reachable by a zig-zag path of models, where at each step we take either a forcing extension or a ground. The generic multiverse of $W$ itself can be viewed as a small part, a local neighborhood, of any of the much larger collections of models that express fuller multiverse conceptions. For example, one could look at the class-forcing multiverse, arising by closing $W$ under class forcing extensions and grounds, or the pseudo-ground multiverse, obtained by closing under pseudo-grounds, or the multiverse arising by closing under arbitrary extensions and inner models, and so on.

These collections of models, each a toy multiverse, if you will, offer various mathematically precise contexts in which one may investigate multiverse issues. Questions that begin philosophically, perhaps concerning the nature of what one might imagine as the full actual multiverse---the multiverse in which our (current) set-theoretic universe is one amongst many other set-theoretic worlds instantiating all the various alternative concepts of set that we might adopt---are transformed into analogous but mathematically precise questions in the toy multiverses, and we may hope to settle them. In this way, philosophical contemplation becomes mathematical investigation, and each toy multiverse serves as a proxy for the full actual multiverse.\footnote{I discuss this proxy idea further in the final parts of~\cite{Hamkins2014:MultiverseOnVeqL}, but see also~\cite{Hamkins2012:TheSet-TheoreticalMultiverse, Hamkins2011:TheMultiverse:ANaturalContext, Hamkins2009:SomeSecondOrderSetTheory}.} This article is an instance of the process: by presenting the mathematical solutions to several natural questions about closure and amalgamation in the case of the generic multiverse of a given countable transitive model of set theory $W$, we hope to gain insight about what might be true in the multiverse of $V$.

One may view the generic multiverse of $W$ as a Kripke model of possible worlds, connected by the forcing extension and ground model relations as a notion of accessibility, and this perspective leads one to consider the modal logic of forcing (see~\cite{HamkinsLoewe2008:TheModalLogicOfForcing,HamkinsLoewe2013:MovingUpAndDownInTheGenericMultiverse, Hamkins2003:MaximalityPrinciple}). An open question arising from that work is the following:

\begin{question}\label{Question.Inclusion=ForcingExtension?}
 Does the inclusion relation coincide with the ground-model/forcing-extension relation in the generic multiverse? That is, if $M$ is in the generic multiverse of $W$ and $W\of M$, must $M$ be a forcing extension of $W$?
\end{question}

A related open question concerns downward directedness:

\begin{question}\label{Question.DDG?}
 If $M$ and $N$ have a common forcing extension, must they have a common ground model?
\end{question}

In other words, if $M[G]=N[H]$ for $M$-generic $G\of\P\in M$ and $N$-generic $H\of\Q\in N$, then must there be model $W$ such that $M=W[g]$ and $N=W[h]$ both arise as forcing extensions of $W$? The {\df downward directed grounds hypothesis} (\DDG) is the axiom asserting that any two ground models of the universe have a common deeper ground. Although it may appear to involve a second-order quantifier, over grounds, in fact this axiom is first-order expressible in the language of set theory, using the uniform definition of the ground models (see~\cite{FuchsHamkinsReitz2015:Set-theoreticGeology}). Indeed, there is an indexed parameterizaton $W_r$ for all sets $r$ of all the ground models of $V$ by set forcing, and so one may also formulation the set-directed strengthening of the \DDG, which asserts that for any set $I$, there is a ground model $W_s$ contained in every $W_r$ for $r\in I$.

The two questions are connected by the following fact.

\begin{theorem}\label{Theorem.DDGImpliesInclusion=ForcingExtension}
 If the downward directed grounds hypothesis holds throughout the generic multiverse of $W$, then inclusion coincides with the ground-model/forcing-extension relation in that generic multiverse.
\end{theorem}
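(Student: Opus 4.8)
The plan is to reduce the statement to two standard ingredients: a rectification of zig-zag paths in the generic multiverse, powered by the downward directed grounds hypothesis, together with the classical intermediate-model theorem for set forcing (the assertion that any model of $\ZFC$ lying between a model $U\satisfies\ZFC$ and one of its set-forcing extensions $U[b]$ is itself a set-forcing extension of $U$, and moreover $U[b]$ is a set-forcing extension of that intermediate model). Suppose $M$ and $N$ lie in the generic multiverse of $W$ and $M\of N$; the goal is to present $N$ as a set-forcing extension of $M$. The heart of the matter is the auxiliary claim that, under the stated hypothesis, any two models $M_0,M_1$ in the generic multiverse of $W$ possess a common ground $U$ in that same generic multiverse, so that $M_0=U[a]$ and $M_1=U[b]$ for some set forcing notions $\P,\Q\in U$ and $U$-generic filters $a\of\P$, $b\of\Q$. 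Granting this, apply it to $M$ and $N$ to obtain a common ground $U$ with $M=U[a]$, $N=U[b]$; then $U\of M\of N=U[b]$, so $M$ is an intermediate model between $U$ and its set-forcing extension $U[b]$, and since $M\satisfies\ZFC$ the intermediate-model theorem tells us in particular that $N=U[b]$ is a set-forcing extension of $M$, which is exactly the desired conclusion.

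It then remains to prove the claim. Since $M_0$ and $M_1$ are in the generic multiverse of $W$, concatenating the path from $M_0$ down to $W$ with the path from $W$ up to $M_1$ produces a finite path $M_0=P_0,P_1,\dots,P_k=M_1$ in which each $P_{i+1}$ is either a set-forcing extension or a set-forcing ground of $P_i$, and in which every $P_i$ again belongs to the generic multiverse of $W$. I would argue by induction on $k$ that $M_0$ and $P_k$ have a common ground in the generic multiverse of $W$. For $k\leq 1$ the smaller of the two models serves (inserting trivial forcing when the two coincide, or when we view a model as a ground of itself). For the inductive step, let $U$ be a common ground of $M_0$ and $P_{k-1}$, say $M_0=U[a]$ and $P_{k-1}=U[b]$, and examine the last step $P_{k-1}\to P_k$. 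If $P_k=P_{k-1}[c]$ is a forcing extension, then $P_k=U[b][c]$ is a set-forcing extension of $U$, and $U$ is already a common ground of $M_0$ and $P_k$. If instead $P_{k-1}=P_k[c]$, then $U$ and $P_k$ are both set-forcing grounds of $P_{k-1}$; since $P_{k-1}$ is in the generic multiverse of $W$, the downward directed grounds hypothesis holds in $P_{k-1}$, so $U$ and $P_k$ admit a common ground $V\in P_{k-1}$ with $U=V[d]$ and $P_k=V[e]$. Then $M_0=U[a]=V[d][a]$ is a set-forcing extension of $V$ and $P_k=V[e]$, so $V$ is a common ground of $M_0$ and $P_k$, and $V$, being a ground of $U$, again lies in the generic multiverse of $W$. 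This closes the induction and proves the claim.

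The only place the hypothesis enters, and the main obstacle to watch, is that inductive step where an ``extension-then-ground'' configuration rooted at the top model $P_{k-1}$ has to be straightened into a pair of extensions over a single common ground $V$ below both $U$ and $P_k$; this is precisely what \DDG, invoked inside $P_{k-1}$, delivers. Two bookkeeping points deserve care: first, one must verify that all forcing notions and generic filters produced along the way stay set-sized, so that iterating two set forcings is again set forcing and so that the intermediate-model theorem actually applies; and second, one must check that \DDG is invoked only in models genuinely belonging to the generic multiverse of $W$. Both are routine, since the generic multiverse is generated entirely by set forcing and grounds of members of the generic multiverse are again members of it, but they are the details on which a clean write-up depends.
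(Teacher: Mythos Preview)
Your proof is correct and takes essentially the same approach as the paper: both use the \DDG\ to straighten zig-zag paths so that any two models in the generic multiverse share a common ground, and then invoke the intermediate-model theorem to conclude that inclusion implies the forcing-extension relation. Your version is simply more explicit about the induction on path length, whereas the paper compresses the argument into the observation that every up-then-down step can be replaced by a down-then-up step and hence every model in the generic multiverse is a forcing extension of a ground of $W$.
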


\begin{proof}
The \DDG\ assumption implies that whenever one has a ground model of a forcing extension, then it is also a forcing extension of a ground model. Thus, the \DDG\ in the generic multiverse of $W$ implies that the zig-zag paths need never go up and then down, that is, from a model $M$ up to a forcing extension $M[G]$ and then down to a ground model $N\of M[G]$, because since $N$ is a ground of $M[G]$ there is some $N$-generic filter $H\of\Q\in N$ for which $N[H]=M[G]$, and so by the \DDG\ there is a common ground $U\of M\cap N$ such that $M=U[g]$ and $N=U[h]$. So one could have gotten from $M$ to $N$ by going down to $U$, and then up to $U[h]=N$. Thus, the generic multiverse of $W$ consists of the forcing extensions $U[g]$ of the grounds $U$ of $W$. And if one such model $U[g]$ is contained in another $U[h]$, then $U\of U[g]\of U[h]$, so that $U[g]$ is an intermediate \ZFC\ model between a ground model $U$ and a forcing extension $U[h]$. It now follows by the intermediate model theorem (see~\cite[cor.~15.43]{Jech:SetTheory3rdEdition}, also~\cite[fact~11]{FuchsHamkinsReitz2015:Set-theoreticGeology}) that $U[g]$ is a ground of $U[h]$ by a quotient of the forcing giving rise to $U\of U[h]$.
\end{proof}

Toshimichi Usuba has very recently announced a proof of the downward directed grounds \DDG\ hypothesis, and indeed, of the strong \DDG\ in \ZFC, which is very welcome and exciting news, and this will settle question~\ref{Question.DDG?} as well as question~\ref{Question.Inclusion=ForcingExtension?}, in light of theorem~\ref{Theorem.DDGImpliesInclusion=ForcingExtension}.

\section{Non-amalgamation in the generic multiverse}

Let's begin with the basic non-amalgamation result, which I first heard from W. Hugh Woodin in the 1990s.

\begin{theorem}[Woodin, {\cite[obs.~35]{FuchsHamkinsReitz2015:Set-theoreticGeology}}]\label{Theorem.Nonamalgamation}
 If $W$ is any countable transitive model of set theory, then there are $W$-generic Cohen reals $c$ and $d$, for which the corresponding forcing extensions $W[c]$ and $W[d]$ have no common extension to a model of set theory with the same ordinals.
\end{theorem}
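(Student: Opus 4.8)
The plan is to build the two Cohen reals $c$ and $d$ so that a real coding a collapse of $W$'s ordinals is computable from the pair. Since $W$ is countable, fix in the real world an enumeration of $W$ in order type $\omega$, and think of $\Ord^W$ as countable; our goal is to arrange that any model $M\supseteq W[c]\cup W[d]$ with $\Ord^M=\Ord^W$ would be able to reconstruct this enumeration and hence see a bijection between $\omega$ and $\Ord^W$, which is impossible if $\Ord^W$ is to remain the class of ordinals of a model of $\ZFC$ (as $\Ord^W$ has uncountable cofinality in any model of $\ZFC$ with those ordinals, or more simply, $M$ would then think some ordinal is the largest, or that $\omega$ surjects onto $\Ord^M$).

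The key step is the coding. Work with the forcing $\Add(\omega,1)$, Cohen forcing, and recall that $c\times d$ can be viewed as a single $W$-generic Cohen real when $c$ is $W$-generic and $d$ is $W[c]$-generic; conversely one wants $c$ and $d$ each $W$-generic but mutually ``entangled.'' First I would pick $c$ to be any $W$-generic Cohen real. Then, working in $W[c]$, I would construct $d$ by a careful generic-real construction (a fusion-style recursion over the real-world enumeration of the dense sets of $W$'s Cohen algebra together with a list of the ordinals of $W$): at stage $n$, having committed to a finite condition, I extend it to meet the $n$th dense set of $W$, and then I append a block of bits that encodes, relative to $c$, the $n$th ordinal of $\Ord^W$ in the sense that the pair $(c,d)$ restricted to that block reveals (say) the order type below that ordinal via some fixed absolute decoding. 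Because at each finite stage only finitely much information is committed, $d$ remains $W$-generic; but the full real $d$, together with $c$, computes a map $\omega\to\Ord^W$ that is cofinal (indeed surjective). Crucially this decoding is arithmetic in $c\oplus d$, hence absolute to any transitive model containing both.

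Then I would finish by contradiction: suppose $M$ is a transitive model of $\ZFC$ with $W[c]\cup W[d]\subseteq M$ and $\Ord^M=\Ord^W$. Since $c,d\in M$, the real $c\oplus d\in M$, and by absoluteness of the arithmetic decoding, $M$ sees a function from $\omega$ onto $\Ord^W=\Ord^M$, so $M$ thinks $\Ord^M$ is countable --- contradicting that $M\satisfies\ZFC$. The main obstacle, and the part requiring genuine care, is the simultaneous bookkeeping in the construction of $d$: one must interleave meeting the $W$-dense sets (to keep $d$ genuinely $W$-generic, so that $W[d]$ is a legitimate forcing extension) with inserting the coding blocks (to guarantee the surjection onto $\Ord^W$ is recoverable), and one must check that the coding blocks, being chosen freely, never conflict with the requirement of hitting dense sets --- this works precisely because Cohen conditions are finite, so any finite coding commitment can still be extended into any dense set. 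A secondary point to verify is that the decoding really is absolute and really does yield a surjection (not merely an unbounded map) onto $\Ord^W$; arranging surjectivity just means devoting the $n$th block to the $n$th ordinal in the fixed enumeration.
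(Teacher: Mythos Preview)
Your overall strategy---arrange that $c$ and $d$ together compute a real that no extension of $W$ with the same ordinals can contain---is right and matches the paper. But there is a genuine gap in the decoding step. You fix $c$ first as an \emph{arbitrary} $W$-generic real and then build $d$ by alternating dense-set extensions with coding blocks, asserting that the decoding is arithmetic in $c\oplus d$. You never say, though, how a model holding only $c$ and $d$ locates those blocks. The dense-set extensions spliced into $d$ are arbitrary finite strings dictated by $W$'s dense sets, over which you have no control, so any marker pattern you place in $d$ can be accidentally duplicated inside them; and since $c$ was fixed in advance and independently of the construction of $d$, it carries no synchronization information whatsoever. The paper avoids this by building $c$ and $d$ \emph{simultaneously}: at each stage one of them is padded with $0$s out to the current length of the other, followed by a single $1$ and then one bit $z(n)$ of a fixed bad real $z$, and only then extended into the next dense set. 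These alternating $0$-runs make the stage boundaries recoverable purely from the pair---the first $1$ in $d$ sits at position $|c_0|$, the first $1$ in $c$ at or beyond that position sits at $|d_0|$, and so on---so the decoding really is absolute. With your asymmetric setup there is no evident way to recover the block positions.

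There is also a secondary confusion: you want the $n$th block to reveal ``the order type below the $n$th ordinal $\alpha_n$,'' but a code for an infinite well-order is itself an infinite object and cannot fit in a finite block. The clean move, which the paper makes, is to fix \emph{once} a single real $z\in 2^\omega$ coding a relation on $\omega$ of order type $\Ord^W$, and then record only the single bit $z(n)$ at stage $n$; any model containing $z$ then sees the collapse of $\Ord^W$ all at once.
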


\begin{proof}
Let us view Cohen forcing as the partial order $2^\ltomega$ consisting of finite binary sequences ordered by extension. Enumerate the dense subsets of this forcing in $W$ as $\<D_n\mid n<\omega>$. Fix a real $z\in 2^\omega$ that could not possibly exist in any forcing extension of $W$, such as a real coding a relation on $\omega$ with order type $\Ord^W$. We shall now build the reals $c,d\in 2^\omega$ in stages, with $$c=\Union_n c_n\qquad d=\Union_n d_n,$$ where $c_n$ and $d_n$ are the finite binary initial segments of $c$ and $d$, respectively, that have been specified by stage $n$. We undertake the construction in such a way that $c_n$ and $d_n$ are each in $D_n$, so that the reals individually are $W$-generic, but from $c$ and $d$ together, we can compute $z$. To begin, let $c_0$ be any element of $D_0$, and let $d_0$ consist of $|c_0|$ many $0$s, followed by a $1$ and then the $0^{\rm th}$ bit $z(0)$ of $z$, and then extended so that $d_0\in D_0$. Next, we extend $c_0$ by padding with $0$s until it has the length of $d_0$, and then a $1$, and then the next bit $z(1)$, followed by an extension to $c_1$ that is in $D_1$. Now form $d_1$ by padding $d_0$ with $0$s until the length of $c_1$, followed by a $1$, followed by $z(2)$, and then extended to an element $d_1\in D_1$. And so on in this same pattern. Since the sequences $c_n$ and $d_n$ are in $D_n$, it follows that both $c$ and $d$ will be $W$-generic Cohen reals. But notice that if we have both $c$ and $d$ together, then because the padding with $0$s exactly identifies the coding points, we can therefore reconstruct the construction history $c_n$ and $d_n$ and therefore compute the real $z$. So there can be no common extension $W[c],W[d]\of U$ to a model of \ZFC\ with the same ordinals, as if both $c,d\in U$, then $z$ would also be in $U$, contrary to our assumption on $z$.
\end{proof}

The same argument generalizes to construct three $W$-generic Cohen reals $c,d,e$ such that any two of them are mutually $W$-generic, but the three models $W[c], W[d], W[e]$ have no common extension with the same ordinals. And more generally:

\begin{theorem}[{\cite[obs.~36]{FuchsHamkinsReitz2015:Set-theoreticGeology}}]\label{Theorem.Nonamalgamation-n-reals}
 If $W$ is any countable transitive model of set theory, then for any finite $n$ there are distinct $W$-generic Cohen reals $c_0,\ldots,c_n$, any proper subset of which is mutually $W$-generic, but the models $W[c_i]$ altogether have no common extension to a model of set theory with the same ordinals as $W$.
\end{theorem}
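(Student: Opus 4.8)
The plan is to run the construction from the proof of theorem~\ref{Theorem.Nonamalgamation} with all $n+1$ reals simultaneously, leapfrogging cyclically through the coordinates rather than alternating between two, while insisting that every dense set of every $n$-fold subproduct of Cohen forcing gets met; this will make every $n$-element subcollection mutually $W$-generic, and yet the full collection will still code a real that could not exist in a common extension.

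First I would fix a real $z\in 2^\omega$ coding a relation on $\omega$ of order type $\Ord^W$, so that, as in theorem~\ref{Theorem.Nonamalgamation}, $z$ can belong to no model of set theory with the same ordinals as $W$. Using the countability of $W$, enumerate as $\<(S_m,D_m)\mid m<\omega>$ all pairs in which $S_m\of\{0,\ldots,n\}$ has exactly $n$ elements and $D_m\in W$ is dense in the $n$-fold product $\prod_{i\in S_m}2^\ltomega$ of copies of Cohen forcing. It suffices to handle the $n$-element subsets, since genericity of a tuple for a product forcing entails genericity of every subtuple for the corresponding subproduct; meeting all the $D_m$ will therefore yield mutual genericity of every proper subcollection at once, and in particular individual genericity of each $c_i$.

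Then I would build the reals $c_0,\ldots,c_n$ in stages, each $c_i$ being the union of increasing finite binary initial segments specified stage by stage. At stage $m$: first, for $i=0,1,\ldots,n$ in turn, pad the current segment of $c_i$ with $0$'s until it is strictly longer than the current segment of every $c_j$, then append a single $1$ as a marker, then a coding bit; second, extend the tuple $\<c_i\mid i\in S_m>$, distributing the extension among those coordinates, until it lands in $D_m$. Exactly as in theorem~\ref{Theorem.Nonamalgamation}, the blocks of padding $0$'s pinpoint the stage boundaries, so from $c_0,\ldots,c_n$ together one can reconstruct the whole construction history and read off every coding bit in order; by letting the coding bits list the bits of $z$, we obtain $z\in W[c_0,\ldots,c_n]$. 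Consequently no model $U$ with the same ordinals as $W$ can contain all of $c_0,\ldots,c_n$, since it would then contain $z$, contrary to the choice of $z$; this is the asserted non-amalgamation of the models $W[c_i]$. Conversely, every $n$-element $S\of\{0,\ldots,n\}$ and every $D\in W$ dense in $\prod_{i\in S}2^\ltomega$ occur together as some $(S_m,D_m)$, and at stage $m$ the tuple $\<c_i\mid i\in S>$ meets $D$; hence $\<c_i\mid i\in S>$ is $W$-generic for the $n$-fold product, so the reals $c_i$ for $i\in S$ are mutually $W$-generic, and in particular distinct for $n\ge 2$, each being generic over the others. (The case $n=1$ is theorem~\ref{Theorem.Nonamalgamation}.) There is no tension with the coding: any $W$-generic extension has the same ordinals as $W$ and so cannot contain $z$, so no proper subcollection computes $z$, even though the whole collection does---that is, the full $(n+1)$-tuple is deliberately not product-generic.

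The one point requiring care is the interleaving within a single stage: one must check that appending the marker and coding bit, and the cyclic $0$-padding, never blocks the later extension of the $S_m$-coordinates into $D_m$. This is immediate from density, exactly as in the two-real case, since from any condition one can always extend into $D_m$. I expect this bookkeeping---together with spelling out that reconstructing the construction history genuinely requires all $n+1$ reals, or, more simply, just invoking that genericity of a proper subcollection already forbids its computing $z$---to be the only place where the argument goes beyond a routine transcription of the proof of theorem~\ref{Theorem.Nonamalgamation}.
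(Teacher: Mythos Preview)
Your sketch is correct and follows essentially the same strategy as the paper. The only difference is bookkeeping: the paper enumerates just the dense sets $D_s$ for $\Add(\omega,n)$ and at each stage cycles through every excluded index $i\leq n$ in turn---extending all the other $c_j$ into $D_s$, padding them with $0$s to a common length, and then padding $c_i$ to that length followed by a $1$ and the next bit of $z$---so that the sequence of excluded coordinates is fixed and computable rather than tied to your external enumeration of pairs $(S_m,D_m)$, which makes the reconstruction of $z$ from the reals alone completely transparent.
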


\begin{proof}
Build the reals $c_k=\Union_s c_{k,s}$ in stages. Fix a bad real $z$, which cannot exist in any extension of $W$ with the same ordinals. Enumerate the dense sets $D_s$ of $W$ for the forcing $\Add(\omega,n)$ to add $n$ many Cohen reals. At a given stage, consider each $i\leq n$ in turn and extend all the other $c_{j,s}$ for $j\neq i$ in such a way so as to ensure that $\<c_j>_{j\neq i}$ is in $D_s$, and then pad them all with $0$s to make them all have the same length; pad $c_{j,s}$ with $0$s also to this length, followed by a $1$, followed by the next digit of $z$. In this way, $\<c_j>_{j\neq i}$ is $W$-generic for adding $n$ many Cohen reals, so they are mutually $W$-generic, but the whole collection $\<c_j>_j$ computes the construction history and also the forbidden real $z$, and therefore cannot exist in any extension of $W$ with the same ordinals.
\end{proof}

Let us consider whether this pattern continues into the infinite.

\begin{question}\label{Question.InfiniteNonamalgamation?}
 If $W$ is a countable transitive model of set theory, must there be $W$-generic Cohen reals $\<c_n\mid n<\omega>$, such that any finitely many of them are mutually $W$-generic, but the models $W[c_n]$ for all $n<\omega$ have no common extension to a model of set theory with the same ordinals?
\end{question}

The answer, provided by theorem~\ref{Theorem.ChainUpperBoundIff} and more forcefully by theorem~\ref{Theorem.ChainsOfCohenExtensionsHaveUpperBound}, is that no, in this infinite case we have amalgamation: every increasing chain of Cohen-real extensions $W[c_n]$ is bounded above by $W[d]$ for some $W$-generic Cohen real $d$, so that $W[c_n]\of W[d]$ for all $n$.

\begin{question}\label{Question.GeneralNonamalgamation?}
 Does the nonamalgamation result of theorem~\ref{Theorem.Nonamalgamation} hold for other forcing notions? Does every nontrivial forcing notion exhibit non-amalgamation?
\end{question}

In other words, if $W$ is a countable transitive model of set theory and $\Q\in W$ is a nontrivial notion of forcing, are there $W$-generic filters $g,h\of\Q$ such that $W[g]$ and $W[h]$ have no common forcing extension?

The first thing to say about question~\ref{Question.GeneralNonamalgamation?} is that there is a large class of forcing notions $\Q$ for which the non-amalgamation phenomenon occurs. In particular, the reader may observe as an exercise that the proof of theorem~\ref{Theorem.Nonamalgamation} directly generalizes to many other forcing notions, such as adding Cohen subsets to higher cardinals, or collapsing cardinals to $\omega$ or to another cardinal. Let us push this a bit further, however, by defining that a notion of forcing $\Q$ is {\df wide}, if it is not $|\Q|$-c.c. below any condition. In other words, $\Q$ is wide, if below every condition $q\in\Q$, there is an antichain in $\Q\restrict q$ of the same size as $\Q$. Many commonly considered forcing notions are wide, and these all exhibit the non-amalgamation phenomenon.

\begin{theorem}\label{Theorem.WideForcingNonamalgamation}
 If $W$ is a countable transitive model of \ZFC\ and $\Q$ is a nontrivial notion of forcing that is wide in $W$, then:
 \begin{enumerate}
  \item There are $W$-generic filters $g,h\of\Q$, such that the corresponding forcing extensions $W[g]$ and $W[h]$ have no common extension to a model of set theory with the same ordinals as $W$.
  \item Indeed, for any finite number $n$, there are $W$-generic filters $g_0,\ldots,g_n\of\Q$, such that any proper subset of them are mutually $W$-generic, but there is no common extension of all the $W[g_k]$ to a model of set theory with the same ordinals as $W$.
  \item Furthermore, it suffices for these conclusions that $\Q$ should have merely a nontrivial subforcing notion that is wide.
 \end{enumerate}
\end{theorem}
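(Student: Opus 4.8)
The plan is to generalize the constructions proving Theorems~\ref{Theorem.Nonamalgamation} and~\ref{Theorem.Nonamalgamation-n-reals}, with the wideness hypothesis supplying the manoeuvring room that in the Cohen case came from the freedom to append bits to a finite binary string. I prove (2) directly --- (1) is the case $n=1$ --- and then deduce (3) by a short restriction argument. Fix a bad real $z\in 2^\omega$ coding a well-ordering of $\omega$ of order type $\Ord^W$, so that $z$ lies in no model of set theory with the same ordinals as $W$; in particular $z\notin W$. Since $W$ is countable, enumerate in $V$ all dense subsets $\<D_m\mid m<\omega>$ of $\Q$ belonging to $W$. Working in $W$, fix a well-ordering $\iota$ of $\Q$ in order type $\theta=|\Q|^W$ and, using wideness together with a choice function, fix for every condition $q\in\Q$ an antichain $\<a_\alpha(q)\mid\alpha<\theta>$ below $q$; fix also a $W$-definable coding of pairs (and, for (2), of tuples of length $n{+}1$) of such data into $\theta$. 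These antichains are the substitute for the Cohen ``padding-and-marker'': descending below $a_\alpha(q)$ records the ordinal $\alpha$, and wideness guarantees that doing so still leaves room to meet any prescribed dense set.

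The construction proceeds, as in the proof of Theorem~\ref{Theorem.Nonamalgamation-n-reals}, by building the filters $g_0,\dots,g_n$ as descending chains of conditions in a single $\omega$-length process cycling through the coordinates. When it is coordinate $i$'s turn, with current condition $p$ and with $q$ the condition of the coordinate updated just before it in the cycle: first extend the chains of the remaining coordinates so that their joint tuple enters the next dense set for the product forcing $\Q^n$ coming from $W$ (this makes every $n$-element subtuple mutually $W$-generic); then \emph{mark} coordinate $i$ by descending from $p$ below $a_\alpha(p)$, where $\alpha$ codes together the next bit of $z$ to be recorded and $\iota(q)$, so the mark simultaneously carries a bit of $z$ and a name for where the other chain currently sits; and finally extend freely into the next dense subset $D_m$ of $\Q$. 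Every chain meets every dense set of $W$, so each $g_i$ is $W$-generic; the latitude in the last step is exercised in $V$ and is unavailable inside $W$, which --- exactly as in the Cohen case, where the point is precisely that $z\notin W$ --- is what keeps a dense set of $W$ from blocking the process.

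For the conclusion, suppose $U$ were a common extension of $W[g_0],\dots,W[g_n]$ with $\Ord^U=\Ord^W$. Then $U$ has all the $g_i$ and all the parameters fixed in $W$, and it reconstructs the process by induction: the current conditions at each point are already known; the marking antichain beneath the current condition of the coordinate being processed is then determined; $g_i$ meets it in a unique element; and that element's index unpairs to yield the recorded bit of $z$ together with $\iota$ of the next chain's current condition, which identifies that condition and lets the induction continue. Hence $U$ computes $z$, contradicting that $z$ codes a well-order of order type $\Ord^U$. Together with the product-genericity steps, which give that every proper subtuple of the $g_i$ is mutually $W$-generic, this proves (1) and (2). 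For (3): if $\R$ is a nontrivial wide complete subforcing of $\Q$, apply (1)/(2) to $\R$ to obtain $W$-generic $h_0,\dots,h_n\of\R$ with no common extension having the same ordinals; since $W$ is countable, extend each $h_i$ to a $W$-generic $g_i\of\Q$ (using completeness to lift through the quotient). Any common extension $U$ of the $W[g_i]$ contains each $h_i=g_i\intersect\R$ and so is a common extension of the $W[h_i]$ --- impossible.

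The main obstacle is the marking step. In the Cohen case the mark is the very concrete ``run of $0$s up to the other real's current length, then a $1$, then the coding bit,'' recoverable because the designated length is pinned by the other coordinate. An arbitrary wide $\Q$ has no canonical notion of ``length,'' so one must instead let the size-$|\Q|$ antichains beneath each condition carry both the coding bit and a name for the other chain's position, and verify that a generic filter meets each such antichain in a unique element, that there is always enough room below a condition to perform a mark and still descend into a prescribed dense set, and --- most delicately --- that the construction retains sufficient freedom in $V$ that no dense set of $W$ can obstruct it, so that each $g_i$ genuinely is $W$-generic.
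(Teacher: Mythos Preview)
Your approach is essentially the same as the paper's: use wideness to fix size-$|\Q|$ antichains below every condition, build the filters as descending sequences whose antichain indices code a forbidden real $z$ together with the other chains' current positions, and for (3) lift from the wide subforcing through the quotient. The only point to tighten is the marking step in (2): as your own parenthetical about $(n{+}1)$-tuples indicates, the mark must record the positions of \emph{all} $n$ other coordinates after their joint extension (not just a single $\iota(q)$), so that the reconstruction can locate each successive marking point---the paper phrases this as coding ``the information that was just added by extending the other conditions.''
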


\begin{proof} Consider first just the first case, where we have two generic filters. Enumerate the dense subsets of $\Q$ in $W$ as $\<D_n\mid n<\omega>$, and using the wideness of $\Q$, fix in $W$ an assignment to each condition $q\in\Q$ a maximal antichain $A_q\of\Q\restrict q$ and an enumeration of it as $\<q^{(\alpha)}\mid\alpha<|\Q|>$. Fix also an enumeration of $\Q$ in order type $|\Q|$, which we may assume is an infinite cardinal in $W$. Outside of $W$, fix a bad real $z$, which cannot exist in any extension of $W$ to a model of set theory with the same ordinals, such as a real coding the ordinals of $W$. We shall construct $g$ and $h$ to be the respective filters generated by the descending sequences $p_0\geq p_1\geq\cdots$ and $q_0\geq q_1\geq\cdots$, choosing $p_n,q_n\in D_n$. Begin with any $p_0,q_0\in D_0$. If $p_n$ and $q_n$ are defined, then let $\alpha<|\Q|$ be the ordinal for which $q_n$ is the $\alpha^{\rm th}$ element of $\Q$. We first extend $p_n$ to the $(2\cdot\alpha+z(n))^{\rm th}$ element of $A_{p_n}$, thereby coding $\alpha$ and the value of $z(n)$, and then extend further to a condition $p_{n+1}\in D_{n+1}$. Next, on the other side, we extend $q_n$ by picking the $\beta^{\rm th}$ element of $A_{q_n}$, where $p_{n+1}$ is the $\beta^{\rm th}$ element of $\Q$, and then extend further to $q_{n+1}\in D_{n+1}$. In this way, the filters $g$ and $h$ generated respectively by the $p_n$ and $q_n$ will each be $W$-generic, but in any extension of $W$ that has both $g$ and $h$, we will be able to recover the map $n\mapsto\<p_n,q_n,z(n)>$, because if we know $p_n$, then the way that $g$ meets $A_{p_n}$ determines both $q_n$ and $z(n)$, and the way that $h$ meets $A_{q_n}$ determines $p_{n+1}$. So any extension of $W$ with both $g$ and $h$ also has $z$, which by assumption cannot exist in any extension of $W$ with the same ordinals. So $W[g]$ and $W[h]$ are non-amalgamable, as desired.

Just as in theorem~\ref{Theorem.Nonamalgamation-n-reals}, the argument generalizes to the case of adding any finite number of $W$-generic filters $g_0,\ldots,g_n\of\Q$, such that if one omits any one of them, the result is $W$-generic for $\Q\times\cdots\times\Q$, but the full sequence cannot exist in any extension of $W$ with the same ordinals. One fixes a bad real $z$, and then enumerates the dense sets for the $n$-fold product $\Q^n$, extending all but one so as to meet the relevant dense set, extending the excluded condition into its antichain so as to code the information that was just added by extending the other conditions, and then also coding one more bit of $z$. Omitting any one filter will result in a $W$-generic product of filters, but if one has all of them, then one can reconstruct the entire construction history and therefore also $z$.

Finally, let us suppose merely that $\Q$ has a subforcing notion $\Q_0\of\Q$ that is wide. By what we have proved already, we may find $g_0,h_0\of\Q_0$ which are $W$-generic for $\Q_0$, but are non-amalgamable over $W$. Next, we may find $W[g_0]$-generic and $W[h_0]$-generic filters $g/g_0$ and $h/h_0$, respectively, for the quotient forcing. It follows that $W[g]$ and $W[h]$ are non-amalgamable, since any extension of them would also amalgamate $W[g_0]$ and $W[h_0]$.
\end{proof}

The third claim of the theorem is relevant, for example, in the case of the \Levy\ collapse of an inaccessible cardinal $\kappa$. This forcing is not wide, because it has size $\kappa$ and is $\kappa$-c.c., but the \Levy\ collapse does have numerous small wide forcing factors---for example, it adds a Cohen real---and these are sufficient to cause the non-amalgamation phenomenon.

Meanwhile, the answer to the second part of question~\ref{Question.GeneralNonamalgamation?} is negative, because some forcing notions can always amalgamate their generic filters. Specifically, let us define that a forcing notion $\Q$ exhibits {\df automatic mutual genericity over $W$}, if whenever $g,h\of\Q$ are distinct $W$-generic filters, then they are mutually generic, so that $g\times h$ is $W$-generic for $\Q\times\Q$. In this case, both $W[g]$ and $W[h]$ would be contained in $W[g\times h]$, which would be a forcing extension of $W$ amalgamating them. Internalizing the concept to \ZFC, let us define officially that a forcing notion $\Q$ exhibits automatic mutual genericity, if in every forcing extension of $V$, any two distinct $V$-generic filters $G,H\of\Q$ are mutually $V$-generic for $\Q$. (This is first-order expressible in the language of set theory.) It is easy to see that if $\Q$ has the property that whenever $p\perp q$ and $p$ forces that $\dot D$ is dense below $\check q$, then there is a set $D$ in the ground model that is dense below $q$, and a strengthening $p'\leq p$ such that $p'$ forces $\check D\of\dot D$, then $\Q$ exhibits automatic mutual genericity over the ground model. This is a rigidity concept, since if $\Q$ has nontrivial automorphisms, or even if two distinct cones in $\Q$ are forcing equivalent, then clearly it cannot exhibit automatic mutual genericity, since mutually generic filters are never isomorphic by a ground-model isomorphism.

\begin{theorem}\label{Theorem.AutomaticMutualGenericity}
 If $\Diamond$ holds, then there is a notion of forcing that exhibits automatic mutual genericity.
 If there is a transitive model of \ZFC, then there is one $W$ with a notion of forcing $\Q$, such that any two distinct $W$-generic filters $g,h\of\Q$ are mutually generic and hence amalgamable by $W[g\times h]$.
\end{theorem}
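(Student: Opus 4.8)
The plan is to construct, from $\diamondsuit$, a nontrivial notion of forcing $\Q$ satisfying the density-capture criterion highlighted just before the theorem statement---namely, that whenever $p\perp q$ and $p\forces$ ``$\dot D$ is dense below $\check q$'', there is a ground-model set $D$ dense below $q$ together with a strengthening $p'\le p$ forcing $\check D\of\dot D$---and then to invoke the easy observation, already noted, that this criterion yields automatic mutual genericity. Since the criterion is first-order expressible over $V$, a model of $\diamondsuit$ in which this $\Q$ is constructed witnesses the first sentence (automatic mutual genericity in the $\ZFC$-internal sense, that in every forcing extension any two distinct $V$-generic filters for $\Q$ are mutually $V$-generic). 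For the second sentence I would obtain $W$ by passing from a given transitive model of $\ZFC$ to its constructible universe, taking a countable elementary submodel and transitively collapsing, so that $W$ is a countable transitive model of $\ZFC+V=L$ and hence satisfies $\diamondsuit$; countability guarantees that $W$-generic filters for $\Q\in W$ exist, and the criterion, applied inside $W$, makes any two distinct such filters $g,h$ mutually $W$-generic, so that $W[g\times h]$ is a common forcing extension amalgamating $W[g]$ and $W[h]$.

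For the construction of $\Q$ itself: working in a model of $\diamondsuit$, build a normal Suslin tree $T$ by recursion on levels, and let $\Q$ be $T$ ordered by end-extension, so that stronger conditions are longer nodes and the conditions below $q$ are exactly the nodes in the cone above $q$. At each countable limit level $\alpha$ the new level $T_\alpha$ is obtained by selecting countably many branches through $T\restrict\alpha$, and two tasks are interleaved. First, the usual Suslin task: when the $\diamondsuit$-guess at $\alpha$ codes a maximal antichain $A$ of $T\restrict\alpha$, every branch placed into $T_\alpha$ is required to pass through an element of $A$, sealing $A$ as maximal in $T$ and ultimately giving the c.c.c. Second, the new task: when the guess at $\alpha$ codes a triple $(p,q,\dot D)$ with $p\perp q$ for which the approximation at $\alpha$ believes $p\forces$ ``$\dot D$ is dense below $\check q$'', I would run an $\omega$-step recursion $p=s_0\ge s_1\ge s_2\ge\cdots$ inside $T\restrict\alpha$ with levels cofinal in $\alpha$, where at step $n$, handling the $n^{\rm th}$ node $t_n$ of $T\restrict\alpha$ lying below $q$, I use that $s_n\forces$ ``$\dot D$ dense below $\check q$'' to pick $s_{n+1}\le s_n$ and a node $r_n\le t_n$ with $s_{n+1}\forces\check r_n\in\dot D$; then $b=\Union_n s_n$ is placed into $T_\alpha$, and I set $p'=b$ and $D=\set{r_n\st n<\omega}$. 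By construction $p'\le p$, the set $D$ is dense below $q$, and $p'\le s_{n+1}\forces\check r_n\in\dot D$ for every $n$, so $p'\forces\check D\of\dot D$. A routine bookkeeping lets a single $\diamondsuit$-sequence serve both tasks, and by the stationary correctness of $\diamondsuit$ every relevant triple $(p,q,\dot D)$ of the final model---with $\dot D$ a nice name, which the c.c.c.\ permits---is guessed correctly at some suitable $\alpha$, so the criterion holds. Nontriviality is immediate since a normal Suslin tree is atomless, and that distinct generic filters contain incompatible conditions is the standard separativity argument.

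The main obstacle is the reflection issue intrinsic to $\diamondsuit$-constructions: the name $\dot D$ is a name for a subset of the entire cone above $q$, reaching levels $\ge\alpha$, so it is not literally a $T\restrict\alpha$-name, and the facts ``$s_n\forces$ $\dot D$ dense below $\check q$'' and ``$s_{n+1}\forces\check r_n\in\dot D$'' must be read as facts in an approximating structure $L_\alpha$ that genuinely reflect the corresponding facts in the final model. This is handled exactly as in the known constructions of rigid Suslin trees, or Suslin trees with prescribed automorphism-theoretic behavior: one restricts the relevant limit stages to the club of $\alpha$ for which $L_\alpha$---equivalently $\<T_\beta\mid\beta<\alpha>$ together with the guessed name---is a sufficiently elementary approximation, so that the density of $\dot D$ below $q$ reflects and the witnesses $s_{n+1},r_n$ can always be found inside $T\restrict\alpha$, while $\diamondsuit$ supplies stationarily many such correct $\alpha$ for each target name. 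I expect the technical heart of the write-up to be fixing the coding of names and the elementarity of $L_\alpha$ precisely enough that the limit-stage recursion goes through and the forcing relation is correctly computed in the approximation.
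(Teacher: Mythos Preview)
Your approach has a genuine gap: the set $D=\{r_n\mid n<\omega\}$ you construct at stage $\alpha$ is not dense below $q$ in the final tree $T$, only in $T\restrict\alpha$. A subset of the cone $T_q$ that is dense in the forcing sense must contain nodes at unboundedly many levels, since any node $t\in T_q$ at level $\geq\alpha$ requires a further tree-extension lying in $D$; but your $D$ lies entirely below level $\alpha$. Arranging that every level-$\alpha$ node of $T_q$ sits tree-above some $r_n$ does not help either, since tree-extensions of the $r_n$ need not be forced into $\dot D$ unless $\dot D$ is forced to be open.

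In fact the density-capture criterion cannot be satisfied by any normal Suslin tree. Fix an enumeration of immediate successors and let $\dot D$ name the set $T_q\restrict L$, where $L$ is the set of levels $\ell$ at which the generic branch $g$ below $p$ takes the $0^{\rm th}$ successor in passing from level $\ell$ to $\ell+1$. Then $p\forces``\dot D$ is dense below $\check q$'', since genericity makes $L$ unbounded. But any ground-model dense $D_0$ with $p'\forces\check D_0\of\dot D$ would yield an unbounded ground-model set $S$ of levels with $p'\forces\check S\of\dot L$; this is impossible, since $p'$ determines $g$ only up to its own level, and genericity below $p'$ guarantees that $g$ takes a non-$0^{\rm th}$ successor at some level in $S$. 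So the criterion the paper mentions as sufficient is not the criterion actually verified in the proof.

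The paper takes a different route: it cites the result of Fuchs and Hamkins that under $\Diamond$ there is a Suslin tree $T$ that is \emph{Suslin off the generic branch}, meaning $T_q$ remains Suslin in $V[g]$ whenever $q\notin g$, and then argues automatic mutual genericity directly from that property. Given distinct $V$-generic branches $g,h$ and $q\in h\setminus g$, every maximal antichain of $T_q$ in $V[g]$ is countable by Suslinity there and hence refined by a level of $T_q$; since $h$ is a cofinal branch through $T_q$, it meets every level and therefore every such antichain, so $h$ is $V[g]$-generic. Your second $\Diamond$-task can be repaired along exactly these lines---seal names $(p,q,\dot A)$ for maximal antichains of $T_q$ by routing the level-$\alpha$ nodes of $T_q$ through elements that $p'$ forces into $\dot A$---which is essentially the cited construction. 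Your treatment of the second sentence of the theorem (passing to a countable transitive $W\models\Diamond$ via $L$ and a Skolem hull) is correct and matches the paper.
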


\begin{proof}
By~\cite[thm.~2.6]{FuchsHamkins2009:DegreesOfRigidity}, it follows that $\Diamond$ implies that there is a Suslin tree $T$ on $\omega_1$ that is {\df Suslin off the generic branch}, in the sense of~\cite[def.~2.2]{FuchsHamkins2009:DegreesOfRigidity}, which means that after forcing with $T$, which adds a generic branch $b\of[T]$, the tree remains Suslin below any node that is not on $b$. (A generic Suslin tree also has this property; see~\cite[thm.~2.3]{FuchsHamkins2009:DegreesOfRigidity}.) If a tree is Suslin off the generic branch, then it must also have the unique branch property---forcing with it adds exactly one branch---since a second branch would contradict the Suslinity of that part of the tree, and thus, this property is a strong form of rigidity. But more, such a tree used as a forcing notion exhibits automatic mutual genericity. To see this, suppose that $g,h\of T$ are distinct $V$-generic filters for this forcing, individually. Let $p\in T$ be a node of the tree that lies on $h$, but not $g$. Since the tree was Suslin off the generic branch in $V$, it follows that $T_p$, the part of $T$ consisting of nodes comparable with $p$, is a Suslin tree in $V[g]$. Thus, every antichain of $T_p$ in $V[g]$ is refined by a level of the tree. Since $h$ is a cofinal branch through $T_p$, it follows that $h$ meets every level of the tree and hence also every antichain in $V[g]$. So $h$ is $V[g]$-generic and thus they are mutually generic.

For the second claim, if there is a countable transitive model of \ZFC, then there is one $W$ satisfying $\Diamond$, which therefore has a tree that is Suslin off the generic branch. Thus, any two distinct $W$-generic filters $g,h\of\Q$ are mutually generic and so $W[g]$ and $W[h]$ are amalgamated by $W[g\times h]$, which is a forcing extension of $W$.
\end{proof}

The proof of theorem~\ref{Theorem.AutomaticMutualGenericity} shows that it is relatively consistent with \ZFC\ that there is a forcing notion exhibiting automatic mutual genericity and hence supporting amalgamation, but the argument doesn't settle the question of whether such kind of forcing exists in every model of set theory.

\begin{question}
 Is it consistent with \ZFC\ that there is no forcing notion with automatic mutual genericity? 
\end{question}

Note that there are other weaker kinds of necessary amalgamation. For example, if $c$ is a $W$-generic Cohen real and $A\of\omega_1^W$ is $W$-generic for the forcing to add a Cohen subset of $\omega_1$, then $c$ and $A$ are mutually generic, because the forcing to add $A$ is countably closed in $W$ and therefore does not add new antichains for the forcing $\Add(\omega,1)$ to add $c$. This phenomenon extends to many other pairs of forcing notions $\P$ and $\Q$, such that any $W$-generic filters $g\of\P$ and $h\of\Q$ are necessarily mutually generic.

\section{Upward closure in the generic multiverse}

Let us turn now to the question of upward closure. Suppose that we have a countable increasing chain of forcing extensions $$W\of W[G_0]\of W[G_1]\of W[G_2]\of\cdots,$$ where $W$ is a countable transitive model of set theory.

\begin{question}\label{Question.ChainUpperBound?}
Under which circumstances may we find an upper bound, a forcing extension $W[H]$ for which $W[G_n]\of W[H]$ for all $n<\omega$?
\end{question}

The question is answered by theorem~\ref{Theorem.ChainUpperBoundIff}, which provides a necessary and sufficient criterion. It is easy to see several circumstances where there can be no such upper bound. For example, if the extensions $W[G_n]$ collapse increasingly large initial segments of $W$, in such a way that every cardinal of $W$ is collapsed in some $W[G_n]$, then obviously we cannot find an extension of $W$ to a model of \ZFC\ with the same ordinals as $W$. It is also easy to see that in general, we cannot require that $\<G_n\mid n<\omega>\in W[H]$; this is simply too much to ask. For example, if every $G_n$ is a $W$-generic Cohen real, then we could flip the initial bits of each $G_n$ in such a way that the resulting infinite sequence $\<G_n\mid n<\omega>$ was coding an arbitrary real $z$, even though such a change would not affect the models $W[G_n]$, since each $G_n$ individually was changed only finitely. This issue is discussed at length in~\cite{FuchsHamkinsReitz2015:Set-theoreticGeology}.

Following ideas in~\cite{HamkinsLeibmanLoewe2015:StructuralConnectionsForcingClassAndItsModalLogic}, let us define that the {\df forcing degree} of a forcing extension $W\of W[H]$, where $H\of\P\in W$ is $W$-generic, is the smallest size in $W$ of the Boolean completion of a forcing notion $\Q\in W$ for which there is a $W$-generic filter $G\of\Q$ for which $W[G]=W[H]$. Thus, the forcing degree of a forcing extension is the smallest size of a complete Boolean algebra realizing that extension as a forcing extension.

\begin{theorem}\label{Theorem.ChainUpperBoundIff}
 Suppose that $W$ is a countable transitive model of \ZFC\ and that
  $$W\of W[G_0]\of W[G_1]\of W[G_2]\of\cdots\of W[G_n]\of\cdots$$
 is an increasing chain of forcing extensions $W[G_n]$, where $G_n\of\Q_n$ is $W$-generic. Then the following are equivalent:
 \begin{enumerate}
  \item The chain is bounded above by a forcing extension $W[H]$, for some forcing notion $\Q\in W$ and $W$-generic filter $H\of\Q$.
  \item The forcing degrees of the extensions $W\of W[G_n]$ are bounded in $W$
 \end{enumerate}
\end{theorem}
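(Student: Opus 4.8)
I would establish the two implications separately; $(1)\Rightarrow(2)$ is routine, and $(2)\Rightarrow(1)$ carries the weight.

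For $(1)\Rightarrow(2)$: if $W[H]$ is an upper bound with $H\of\Q\in W$ being $W$-generic, then each $W[G_n]$ is an intermediate model $W\of W[G_n]\of W[H]$, so by the intermediate model theorem it is a forcing extension of $W$ by a complete subalgebra of $\RO(\Q)$; its forcing degree is therefore at most $|\RO(\Q)|^W$, and this single cardinal of $W$ bounds all the forcing degrees.

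For $(2)\Rightarrow(1)$, fix in $W$ an infinite cardinal $\delta$ bounding all the forcing degrees (a chain of trivial extensions is already bounded by $W$). The plan is to produce an upper bound among the forcing extensions of $W$ by a \Levy\ collapse $\mathbb{C}=\Coll(\omega,\theta)^W$, where $\theta$ is a fixed cardinal of $W$ large enough to absorb everything in sight---one may take $\theta=\delta$ if one uses that a forcing extension determines its complete Boolean algebra up to isomorphism, and in any case $\theta=\beth_\omega(\delta)$ works. First, working in $V$ and applying the intermediate model theorem repeatedly, I would arrange an increasing sequence of complete Boolean algebras $\mathbb{B}_0\of\mathbb{B}_1\of\cdots$ in $W$, all of size below $\theta$ (this is where bounded forcing degree is used), together with $W$-generic filters $H_n\of\mathbb{B}_n$ satisfying $H_n=H_{n+1}\cap\mathbb{B}_n$ and $W[H_n]=W[G_n]$: given $\mathbb{B}_n$ and $H_n$, the inclusion $W[G_n]\of W[G_{n+1}]$ realizes $W[G_{n+1}]$ as a forcing extension of $W[H_n]$ by a quotient forcing of bounded size, which one folds into $\mathbb{B}_{n+1}$. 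By the universality of the \Levy\ collapse, every forcing of size at most $\theta$ embeds completely into $\RO(\mathbb{C})$, and this can be done coherently along the tower, giving complete embeddings $\iota_n\colon\RO(\mathbb{B}_n)\to\RO(\mathbb{C})$ in $W$ with $\iota_{n+1}\restriction\RO(\mathbb{B}_n)=\iota_n$. Finally---and here the countability of $W$ is essential, so that $\mathbb{C}$ and all the $\mathbb{B}_n$, $H_n$, $\iota_n$ are countable---I would build a $W$-generic filter $K\of\mathbb{C}$ by finite approximations, arranging simultaneously that $K$ meets each dense subset of $\mathbb{C}$ in $W$ and that $\iota_n[H_n]\of K$ for every $n$. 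Then the induced generic $\set{b\st\iota_n(b)\in K}$ is a $W$-generic ultrafilter on $\RO(\mathbb{B}_n)$ extending $H_n$, hence equal to $H_n$, so $H_n\in W[K]$ and thus $W[G_n]=W[H_n]\of W[K]$ for all $n$; so $W[K]$ is a forcing extension of $W$ bounding the chain. (The case $\delta=\omega$, with $\mathbb{C}$ Cohen forcing, is essentially Theorem~\ref{Theorem.ChainsOfCohenExtensionsHaveUpperBound}.)

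The crux, and the step I expect to be the main obstacle, is this last construction of a single $W$-generic $K$ realizing all the $H_n$ at once. One cannot instead simply take the forcing extension of $W$ by the completion of $\bigcup_n\mathbb{B}_n$, because the limit filter $\bigcup_n H_n$ need not be $W$-generic for that completion---indeed $\bigcup_n W[G_n]$ need not be a model of \ZFC---and one cannot take an arbitrary $W$-generic for $\mathbb{C}$, because it need not code the required generics. What makes the construction work is the combination of: bounded forcing degree, which lets a single collapse $\mathbb{C}\in W$ absorb the whole tower; the countability of $W$, which both permits the finite-approximation construction and keeps every object in play countable; and the genericity of the $G_n$, which is what allows their decoding data to be woven into $K$ without spoiling its genericity. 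Scheduling so that only $W[G_0],\ldots,W[G_s]$ are attended to at stage $s$, and keeping the embeddings $\iota_n$ coherent (possibly refining the choice of $\mathbb{B}_n$, $H_n$, $\iota_n$ as the construction of $K$ proceeds), is what keeps the two families of requirements---meeting the dense sets and realizing the $H_n$---mutually consistent. The intermediate model theorem and the universality of the \Levy\ collapse are standard.
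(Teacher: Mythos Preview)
Your $(1)\Rightarrow(2)$ is the paper's argument.

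For $(2)\Rightarrow(1)$ the paper takes a different route from yours. It first collapses the bound $\gamma$ by a filter $g$ chosen generic over $\bigcup_n W[G_n]$ (available by countability), reducing to a tower $W[g]\of W[g][c_0]\of W[g][c_0][c_1]\of\cdots$ of mutually generic Cohen reals over $W[g]$; it then takes $H$ generic over $\bigcup_n W[g][c_0\cdots c_n]$ for the finite-support product $\R=\prod_{\alpha<\theta}\R_\alpha$ (every poset of size $\leq\gamma$ repeated unboundedly often), and performs \emph{surgery}, overwriting $H$ at coordinate $\theta_n$ with $c_n$ along a cofinal sequence $\theta_n\to\theta$. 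The $\gamma^{+}$-c.c.\ guarantees each maximal antichain of $\R$ in $W$ has bounded support and hence meets only finitely many surgery sites, and mutual genericity of those finitely many $c_n$ with $H$ shows the modified filter $H^*$ is still $W$-generic.

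Your plan---coherent complete embeddings $\iota_n\colon\mathbb{B}_n\to\RO(\Coll(\omega,\theta))$ in $W$, then a finite-approximation construction of $K$ with $\iota_n[H_n]\of K$---has a genuine gap exactly where you locate it. With the $\iota_n$ fixed in advance, there need not be \emph{any} $W$-generic $K\supseteq\bigcup_n\iota_n[H_n]$: this union can already determine, along the accumulated image $\bigcup_n\iota_n[\mathbb{B}_n]$, a real not addable over $W$---the very obstruction you correctly note for $\bigcup_n\mathbb{B}_n$ recurs verbatim once one pushes everything forward by the $\iota_n$. So the $\iota_n$ must be chosen adaptively, interleaved with the descending conditions $p_s$. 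But then the substantive step is to show that, having extended $p_s$ into the next dense set while keeping $\pi_s(p_s)\in H_s$, one can always find a complete embedding $\iota_{s+1}\in W$ extending $\iota_s$ with $\pi_{s+1}(p_s)\in H_{s+1}$---and this you have not done. Carrying it out amounts to placing the image of each new $\mathbb{B}_{s+1}$ in a ``fresh'' region of $\mathbb{C}$, away from the finite commitment $p_s$ accumulated so far, which is precisely what the paper's product-structure-plus-chain-condition surgery makes explicit and transparent.
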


\begin{proof}
($2\to 1$). This direction is essentially~\cite[thm.~39]{FuchsHamkinsReitz2015:Set-theoreticGeology}, but I shall sketch the argument. Let us first handle the case of product forcing, rather than iterated forcing, the case for which we have a tower with the form $$W\of W[g_0]\of W[g_0][g_1]\of W[g_0][g_1][g_2]\of\cdots,$$ where the $g_n\of\P_n\in W$ are finitely mutually generic over $W$, and the $\P_n$ are uniformly bounded in size by a cardinal $\gamma$ in $W$. Let $\theta>\gamma$ be a sufficiently large regular cardinal in $W$ so that we may enumerate $\<\R_\alpha\mid\alpha<\theta>$ in $W$ all the possible forcing notions in $W$ of size at most $\gamma$, up to isomorphism, with unbounded repetition. Let $\R=\prod_\alpha\R_\alpha$ be the finite support product. This forcing has the $\gamma^+$-chain condition. Let $H\of\R$ be any $\Union_n W[g_0\times\cdots\times g_n]$-generic filter. Select a cofinal sequence $\<\theta_n\mid n<\omega>$ converging to $\theta$, for which $\R_{\theta_n}=\P_n$, and modify the filter $H$ to use $g_n$ at coordinate $\theta_n$ instead of what $H$ had there. If $H^*$ is the new filter, then $H(\theta_n)=g_n$, but at all other coordinates it agrees with $H$. Since $\R$ is $\gamma^+$-c.c., it follows that any maximal antichain for $\R$ in $W$ has bounded support, and thus interacts with only finitely many of the coordinates $\theta_n$ upon which we performed surgery. But $H$ is mutually generic with those finitely many $g_n$, and so that finite amount of surgery will preserve genericity. So $W[H^*]$ is a forcing extension of $W$, and every $g_n\in W[H^*]$ by construction. So $W[g_0\times\cdots\times g_n]\of W[H^*]$, as desired. For the general case, where we have iterated forcing rather than product forcing, consider a tower $W\of W[G_0]\of W[G_1]\of\cdots$, where each $G_n\of\Q_n\in W$ is $W$-generic and the $\Q_n$ are bounded in size. By collapsing the bound, and furthermore using a filter $g$ for the collapse that is not only $W$-generic, but also $W[G_n]$-generic for every $n$---this is possible because there are still only countably many dense sets altogether in $\Union_n W[G_n]$---we produce a larger tower $W\of W[g]\of W[g][G_0]\of W[g][G_1]\of\cdots$, where now the forcing $\Q_n$ is countable in $W[g]$ and thus isomorphic to the forcing to add a Cohen real there. By quotient forcing, we may therefore view this larger tower as $W\of W[g]\of W[g][c_0]\of W[g][c_0][c_1]\of W[g][c_0][c_1][c_2]\of\cdots$, where $W[g][G_n]=W[g][c_0\times\cdots\times c_n]$. Thus, we have reduced to the case of product forcing, for which we have already explained how to find an upper bound $W[H]$ containing every $W[g][c_o\times\cdots\times c_n]$ and hence also every $W[G_n]$ in the original tower.

($1\to 2$). This direction is similar to~\cite[lemma~23]{HamkinsLeibmanLoewe2015:StructuralConnectionsForcingClassAndItsModalLogic}, which was used in the context of the modal logic of forcing to show that the value of the forcing degree of a model over a fixed ground model is a {\df ratchet}, which is to say, that it can be made larger, but never smaller, with further forcing. Suppose that we have a tower $W\of W[G_0]\of W[G_1]\of\cdots$, which is bounded above by the forcing extension $W[H]$, where $H\of\Q\in W$ is $W$-generic. Since $W\of W[G_n]\of W[H]$, it follows by the intermediate model result of~\cite[lemma~15.43]{Jech:SetTheory3rdEdition} that there is a complete subalgebra $\C\of\B$, where $\B$ is the Boolean completion of $\Q$ in $W$, such that $W[G_n]=W[G']$ for some $W$-generic filter $G'\of\C$. Thus, the forcing degree of the extension $W\of W[G_n]$ is bounded by the size of $|\B|^W$, and this does not depend on $n$. So the extensions have uniformly bounded forcing degrees over $W$.
\end{proof}

In the result of theorem~\ref{Theorem.ChainUpperBoundIff}, the upper bound of $W[H]$ provided by the proof involves possibly collapsing a lot of cardinals, but we might not want to do that. For example, in question~\ref{Question.InfiniteNonamalgamation?} we have a tower of extensions $$W\of W[c_0]\of W[c_1]\of W[c_2]\of\cdots,$$ where each $c_n$ is a $W$-generic Cohen real, and we'd like to know whether we can find an upper bound also of this form. A close inspection of the proof of $(2\to 1)$ in theorem~\ref{Theorem.ChainUpperBoundIff} shows that we can dispense with the collapse forcing in this case, but the rest of the argument involves an uncountable product $\R$ of Cohen-real forcing; we can actually use $\Add(\omega,\omega_1)$ in that argument for this case. So the proof does not directly produce an upper bound in the form $W[d]$ of adding a single Cohen real.

Nevertheless, it is true that we can find an upper bound of this form, and this is what I shall now prove in theorem~\ref{Theorem.ChainsOfCohenExtensionsHaveUpperBound}. Specifically, I claim that the collection of models $M[c]$ obtained by adding an $M$-generic Cohen real $c$ over a fixed countable transitive model of set theory $M$ is upwardly countably closed, in the sense that every increasing countable chain has an upper bound. I proved this theorem with Giorgio Venturi back in 2011 in a series of conversations at the \href{http://jdh.hamkins.org/an-introduction-to-boolean-ultrapowers-bonn-2011/}{Young Set Theory Workshop} in Bonn and continuing at the \href{http://jdh.hamkins.org/a-tutorial-in-set-theoretic-geology/}{London summer school on set theory}.

\begin{theorem}\label{Theorem.ChainsOfCohenExtensionsHaveUpperBound}
 For any countable transitive model $W\satisfies\ZFC$, the collection of all forcing extensions $W[c]$ by adding a $W$-generic Cohen real is upward-countably closed. That is, for any countable tower of such forcing extensions
$$W\of W[c_0]\of W[c_1]\of\cdots\of W[c_n]\of\cdots,$$
we may find a $W$-generic Cohen real $d$ such that $W[c_n]\of W[d]$ for every natural number $n$.
\end{theorem}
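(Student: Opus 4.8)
The plan is to build the single $W$-generic Cohen real $d$ by a careful recursion that encodes each $c_n$ into $d$, together with enough of the passage from $W[c_n]$ to $W[c_{n+1}]$ so that $d$ can recover the whole tower. The guiding picture is that each inclusion $W[c_n]\subseteq W[c_{n+1}]$ means, by the intermediate-model theorem applied inside $W[c_{n+1}]$, that $c_n\in W[c_{n+1}]$, so there is a name and a condition witnessing this; moreover $W[c_{n+1}]$ is itself a Cohen extension of $W[c_n]$ or at least of $W$, and in any case $c_{n+1}$ is Cohen-generic over $W[c_n]$ after possibly absorbing it. The key structural observation I would use is that the union $U=\bigcup_n W[c_n]$ is a model of $\ZFC$ (being an increasing union of such models) of the same ordinals as $W$, it is \emph{countable}, and it knows the whole sequence $\langle c_n\mid n<\omega\rangle$. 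So the task reduces to: find $d$ that is $W$-generic for $\Add(\omega,1)$ and such that $c_n\in W[d]$ for every $n$, equivalently such that $U\subseteq W[d]$ is unnecessary but $W[c_n]\subseteq W[d]$ holds for each $n$.

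First I would set up the coding apparatus. Working outside $W$, enumerate in order type $\omega$ all dense subsets of $2^{\ltomega}$ lying in $W$ (there are only countably many since $W$ is countable); this list is the genericity requirement for $d$. Separately, for each $n$ I fix inside $W[c_n]$ (or really inside $U$, which is enough) an enumeration in order type $\omega$ of all the dense subsets of the relevant quotient forcings, so that when I build $d$ I can simultaneously arrange that $d$, read appropriately, decodes to $c_n$. The real $d$ will be built as $d=\bigcup_m d_m$ with $d_m\in 2^{\ltomega}$, where at stage $m$ I (i) meet the $m$-th dense set of $W$, ensuring $W$-genericity, and (ii) append a coding block that writes down the next bits of the diagonal interleaving of $c_0,c_1,\dots$ This is exactly the flavor of the construction in Theorem~\ref{Theorem.Nonamalgamation}, run in reverse: there one coded a forbidden real and thereby \emph{destroyed} amalgamability; here one codes the $c_n$'s and thereby \emph{forces} $W[c_n]\subseteq W[d]$, with the crucial difference that each $c_n$ \emph{is} generic over $W$, so coding it does no harm to the $W$-genericity of $d$.

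The substantive point, and the step I expect to be the main obstacle, is the \emph{bookkeeping that lets a single real $d$ decode back to all of $c_0,c_1,c_2,\dots$ simultaneously while still being $W$-generic}. The difficulty is that $d$ must, from inside $W[d]$, allow one to compute each $c_n$; but $W[d]$ does not a priori contain the sequence $\langle c_n\mid n<\omega\rangle$ (indeed, as the paragraph before the theorem points out, demanding the sequence itself land in $W[d]$ is hopeless). The resolution is to fix, once and for all and outside $W$, a single master real $z\in 2^\omega$ that codes the whole sequence $\langle c_n\mid n<\omega\rangle$ \emph{relative to a fixed well-ordering of the countable model} $U$ — for instance, let $z$ code a bijection $\omega\to U$ together with the positions of each $c_n$ — and then build $d$ so that $d$ computes $z$. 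Since $z$ has all the $c_n$ recoverable from it and $U$-parameters that themselves live in $W[z]$-style extensions, one checks that $c_n\in W[d]$ for each $n$ by: $d\ge_T z$, and from $z$ (with a ground-model-definable decoding) one reads off $c_n$. One then verifies $W[c_n]\subseteq W[d]$ because $c_n\in W[d]$ and $W[c_n]$ is generated over $W$ by $c_n$. The only thing to be careful about is that $z$ must be chosen so that it is still consistent with $W$-genericity of $d$ to have $d$ compute $z$; but $z$ codes only objects from the \emph{countable} model $U$, which has the same ordinals as $W$, so there is no obstruction of the Theorem~\ref{Theorem.Nonamalgamation} type — unlike the forbidden real there, this $z$ \emph{can} exist in an extension of $W$ with the same ordinals, namely in $U$ itself, hence a fortiori there is nothing preventing $z$, and therefore the $c_n$, from appearing in $W[d]$. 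Making the interleaving/coding blocks precise — choosing the padding pattern so the decoding is unambiguous and the genericity stages are not disturbed — is the routine but delicate part; it is a direct elaboration of the stage-by-stage padding argument already used in Theorem~\ref{Theorem.Nonamalgamation}, applied now to code a \emph{permissible} real rather than a forbidden one.
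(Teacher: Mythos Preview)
Your proposal has a genuine gap, and it is precisely the obstacle that the paper's proof is designed to avoid. You propose to code a single real $z$ that packages the entire sequence $\langle c_n\mid n<\omega\rangle$, arrange that $d$ computes $z$, and conclude $c_n\in W[d]$. But such a $z$ need not be addable to $W$ by Cohen forcing at all. Each $c_n$ is $W$-generic, yet the bit $c_n(0)$ is unconstrained: for any given tower one may replace each generator $c_n$ by the real obtained by flipping its first bit without changing $W[c_n]$, so the sequence $\langle c_n(0)\mid n<\omega\rangle$ can be arranged to be any real whatsoever, for instance a real coding a well-order of $\omega$ of type $\omega_1^W$. Since Cohen forcing is c.c.c.\ and preserves $\omega_1^W$, that real cannot lie in any $W[d]$, and hence neither can your $z$. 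The paper makes exactly this point just before undertaking the construction: one cannot in general allow the full sequence $\langle c_n\mid n<\omega\rangle$ into the upper bound. Your supporting claims are also incorrect: the union $U=\bigcup_n W[c_n]$ is not in general a model of \ZFC\ (power set fails unless the tower stabilizes, since $\mathcal P(\omega)^U$ is not in any $W[c_n]$), and $U$ does not contain the sequence $\langle c_n\mid n<\omega\rangle$ but only each $c_n$ individually.

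The paper's argument takes a genuinely different route that sidesteps this. First one rewrites the tower as $W[b_0]\subseteq W[b_0\times b_1]\subseteq\cdots$ with the $b_i$ finitely mutually $W$-generic, using that quotients of Cohen forcing are again Cohen. Then one builds $d:\omega\times\omega\to 2$ column by column, letting $d_n$ be a \emph{finite modification} of $b_n$. At stage $n$ the already-specified columns $d_0,\ldots,d_{n-1}$ are genuinely $W$-generic for $\Add(\omega,n)$ (being finite modifications of the mutually generic $b_0,\ldots,b_{n-1}$), so one can extend the finite commitment on the remaining columns into the $n^{\rm th}$ dense set for $\Add(\omega,\omega)$ compatibly with them; one then sets $d_n$ to agree with $b_n$ except on the finitely many bits needed for compatibility. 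The resulting $d$ is $W$-generic for $\Add(\omega,\omega)$, and each $b_n$ is recovered in $W[d]$ from $d_n$ together with a single finite set in $W$, so $W[b_0\times\cdots\times b_n]\subseteq W[d]$. Crucially, the sequence $\langle b_n\mid n<\omega\rangle$ is \emph{not} placed in $W[d]$; only each $b_n$ separately is, which is exactly what the theorem requires and exactly what your coding-of-$z$ strategy cannot deliver.
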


\begin{proof}
Suppose that we have such a tower of forcing extensions $W[c_0]\of W[c_1]\of W[c_2]$, and so on. Note that if $W[b]\of W[c]$ for $W$-generic Cohen reals $b$ and $c$, then $W[c]$ is a forcing extension of $W[b]$ by a quotient of the Cohen-real forcing. But since the Cohen forcing itself has a countable dense set, it follows that all such quotients also have a countable dense set, and so $W[c]=W[b][b_1]$ for some $W[b]$-generic Cohen real $b_1$. Thus, we may view the tower as having the form:
$$W[b_0]\of W[b_0\times b_1]\of\cdots\of W[b_0\times b_1\times\cdots\times b_n]\of\cdots,$$
where now it follows that any finite collection of the reals $b_i$ are mutually $W$-generic.

Of course, we cannot expect in general that the real $\<b_n\mid n<\omega>$ is $W$-generic for $\Add(\omega,\omega)$, since this real may be very badly behaved. For example, the sequence of first-bits of the $b_n$'s may code a very naughty real $z$, which cannot be added by forcing over $W$ at all. So in general, we cannot allow that this sequence is added to the limit model $W[d]$. (See further discussion in my blog post~\cite{Hamkins2015:BlogPostUpwardClosureInTheToyMultiverseOfAllCountableModelsOfSetTheory}.) We shall instead undertake a construction by making finitely many changes to each real $b_n$, resulting in a real $d_n$, in such a way that the resulting combined real $d=\oplus_n d_n$ is $W$-generic for the forcing to add $\omega$-many Cohen reals, which is of course isomorphic to adding just one. To do this, let's get a little more clear with our notation. We regard each $b_n$ as an element of Cantor space $2^\omega$, that is, an infinite binary sequence, and the corresponding filter associated with this real is the collection of finite initial segments of $b_n$, which will be a $W$-generic filter through the partial order of finite binary sequences $2^{<\omega}$, which is one of the standard isomorphic copies of Cohen forcing. We will think of $d$ as a binary function on the plane $d:\omega\times\omega\to 2$, where the $n^{th}$ slice $d_n$ is the corresponding function $\omega\to 2$ obtained by fixing the first coordinate to be $n$.

Now, we enumerate the countably many open dense subsets of $W$ for the forcing to add a Cohen real $\omega\times\omega\to 2$ as $D_0$, $D_1$, and so on. Now, we construct $d$ in stages. Before stage $n$, we will have completely specified $d_k$ for $k<n$, and we also may be committed to a finite condition $p_{n-1}$ in the forcing to add $\omega$ many Cohen reals. We consider the dense set $D_n$. We may factor $\Add(\omega,\omega)$ as $\Add(\omega,n)\times\Add(\omega,[n,\omega))$. Since $d_0\times\cdots\times d_{n-1}$ is actually $W$-generic (since these are finite modifications of the corresponding $b_k$'s, which are mutually $W$-generic, it follows that there is some finite extension of our condition $p_{n-1}$ to a condition $p_n\in D_n$, which is compatible with $d_0\times\cdots\times d_{n-1}$. Let $d_n$ be the same as $b_n$, except finitely modified to be compatible with $p_n$. In this way, our final real $\oplus_n d_n$ will contain all the conditions $p_n$, and therefore be $W$-generic for $\Add(\omega,\omega)$, yet every $b_n$ will differ only finitely from $d_n$ and hence be an element of $W[d]$. So we have $W[b_0]\cdots[b_n]\of W[d]$, and we have found our upper bound.
\end{proof}

Notice that the real $d$ we construct is not only $W$-generic, but also $W[c_n]$-generic for every $n$.

\bibliographystyle{alpha}
\bibliography{MathBiblio,HamkinsBiblio,WebPosts}

\end{document}